\theoremstyle{plain}
\newtheorem{thm}{Theorem}[section]
\newtheorem{cor}[thm]{Corollary}
\newtheorem{prop}[thm]{Proposition}
\newtheorem*{prob*}{Problem}
\theoremstyle{definition}
\newcommand{\N}{\ensuremath{\mathbb{N}}}
\newcommand{\cP}{\ensuremath{\mathcal{P}}}
\newcommand{\cQ}{\ensuremath{\mathcal{Q}}}
\newcommand{\inv}{\ensuremath{^{-1}}}
\newcommand{\es}{\ensuremath{\emptyset}}
\newcommand{\sub}{\subseteq}
\def\qt{quasi-tran\-si\-tive}
\def\qi{quasi-iso\-metric}
\def\qiy{quasi-iso\-me\-try}
\newcommand{\comment}[1]{}
\newenvironment{txteq*}
  {
    \begin{equation*}
    \begin{minipage}[c]{0.85\textwidth} % set width to 0.9 x textwidth
    \em                                % switch on emph
  }
  {\end{minipage}\end{equation*}\ignorespacesafterend}
\begin{document}

\title{Minor exclusion in quasi-transitive graphs}
\author{Matthias Hamann}
\thanks{Funded by the German Research Foundation (DFG) -- project number 448831303.}
\address{Matthias Hamann, Mathematics Institute, University of Warwick, Coventry, UK}

\date{}

\begin{abstract}
In this note, we show that locally finite \qt\ graphs are \qi\ to trees if and only if every other locally finite \qt\ graph \qi\ to them is minor excluded.
This generalizes results by Ostrovskii and Rosenthal and by Khukhro on minor exclusion for groups.
\end{abstract}

\maketitle

\section{Introduction}

For two graphs $G,H$, we call $H$ a \emph{minor} of~$G$ if $H$ can be obtained from~$G$ by contracting edges and deleting edges and vertices.
A graph is \emph{minor excluded} if there exists some finite graph that is not a minor of it.

In graphs, minor exclusion has played an important role for a long time, e.\,g.\ via Kuratowski's planarity criterion.
Considering groups, minor exclusion was mostly considered in the case of planar groups, where a finitely generated group is \emph{planar} if it has a planar locally finite Cayley graph.
Ostrovskii and Rosenthal~\cite{OR-MinorExclusionGroups} looked at minor exclusion for groups from a broader viewpoint: do there exists locally finite groups all of whose locally finite Cayley graphs are minor excluded (not minor excluded)?
They answered both questions positively:
they proved for an infinite class of finitely generated groups that all of their locally finite Cayley graphs are not minor excluded and they proved that every locally finite Cayley graph of any finitely generated virtually free group is minor excluded.
This latter result was extended by Khukhro \cite{K-MinorExclusionFreeGroups}.
She showed the reverse direction, i.\,e.\ characterised the finitely generated groups all of whose locally finite Cayley graphs are minor excluded as the finitely generated virtually free groups.

We generalise this characterisation to \qt\ graphs, where a graph is \emph{\qt} if its automorphism group acts on it with only finitely many orbits.
The analogue of looking at Cayley graphs in this situation is that we ask for minor exclusion for all locally finite \qt\ graphs that are \qi\ to the original one.
For two graph $G$ and~$H$ a map $\varphi\colon V(G)\to V(H)$ is \emph{\qi} if there exist $\gamma\geq 1$ and $c\geq 0$ such that the following holds for all $x,y\in (G)$:
\[
\frac{1}{\gamma}d_H(\varphi(x),\varphi(y))-c\leq d_G(x,y)\leq\gamma d_H(\varphi(x),\varphi(y))+c,
\]
where $d_G$ and $d_H$ denote the distance functions in~$G$ and~$H$, respectively.
We will prove the following theorem.

\begin{thm}\label{thm_Main}
Let $G$ be a locally finite \qt\ graph.
Then $G$ is \qi\ to a tree, if and only if every locally finite \qt\ graph \qi\ to~$G$ is minor excluded.
\end{thm}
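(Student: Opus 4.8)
The plan is to route both directions through the structure theory of locally finite \qt\ quasi-trees. The one substantial external input I will use is the known characterisation: a locally finite \qt\ graph is \qi\ to a tree if and only if it has finite tree-width, equivalently if and only if it carries an $\Aut$-invariant \td\ of finite adhesion whose torsos are all finite, equivalently if and only if it is a \ta\ of finitely many finite graphs. With this in hand the forward implication is soft, and the content of the theorem is the converse.

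\emph{Forward direction.} Being \qi\ to a tree is preserved under \qis, so every locally finite \qt\ graph~$H$ that is \qi\ to~$G$ is again \qi\ to a tree, and hence, by the characterisation, has finite tree-width~$w$. A graph of tree-width~$w$ has no $K_{w+2}$ minor, so $H$ is minor excluded. This is the analogue of ``virtually free $\Rightarrow$ all locally finite Cayley graphs minor excluded''.

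\emph{Converse.} I argue the contrapositive: given a locally finite \qt\ graph $G$ that is \emph{not} \qi\ to a tree, I must produce a locally finite \qt\ graph $H$, \qi\ to~$G$, having $K_n$ as a minor for every~$n$. By the characterisation $G$ has infinite tree-width; by the structure theory this is witnessed, in an $\Aut(G)$-invariant way, either inside an infinite torso of an invariant \td\ of finite adhesion (when $G$ is accessible) or at a thick end of~$G$ (otherwise), and in either case the grid-minor theorem (resp.\ Halin's grid theorem) produces grid-like substructures of every size sitting inside that invariant piece. I then let $H$ be a bounded, $\Aut(G)$-equivariant thickening of~$G$ along this piece. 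The model case is the passage from the square grid $\Z^2$ to the ``king grid'' --- $\Z^2$ with both diagonals of every cell added --- which is still \qi\ to $\Z^2$ and still locally finite, but in which two connected vertex-disjoint sets can cross past one another (one uses the diagonal of a cell, the other the opposite diagonal, and the four corners are all distinct). Iterating this ``weaving'' one routes all $\binom n2$ required adjacencies along an $n$-by-$n$ grid and obtains a $K_n$ minor of~$H$ for every~$n$, so $H$ is not minor excluded. This is the analogue of Khukhro's converse, ``all locally finite Cayley graphs minor excluded $\Rightarrow$ virtually free''.

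\emph{Main obstacle.} The whole weight of the argument is on this converse, in two places. First, one must upgrade the bare hypothesis ``$G$ is not \qi\ to a tree'' to a genuinely uniform, $\Aut(G)$-invariant grid-like (or higher-dimensional) piece that admits an equivariant thickening; this needs the accessibility dichotomy, invariant tree-decompositions, and an equivariant reading of the grid-minor and Halin theorems, with the inaccessible case handled on its own. Second, one must verify that the thickened graph really does contain every~$K_n$ as a minor: the weaving argument is transparent for $\Z^2$ versus the king grid, but in general it has to be executed inside whatever ``at least two-dimensional'' piece the structure theory delivers, checking that the branch sets stay pairwise disjoint and connected while every prescribed adjacency is realised. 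Everything else --- transitivity of the relation ``\qi\ to a tree'', the $K_{w+2}$ bound, and the fact that a bounded equivariant thickening alters neither the \qi\ type nor local finiteness --- is routine.
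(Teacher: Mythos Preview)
Your forward direction is correct and parallel to the paper's, just packaged via tree-width: the paper instead shows directly (Proposition~\ref{prop_QITreeIsMinEx}) that any bounded-degree graph \qi\ to a bounded-degree tree is minor excluded, after first arranging a $3$-regular tree target. Either route works.

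The converse is where your plan diverges, and it is both more complicated and not yet a proof. The paper's key move is a single canonical thickening: take $H=G^2$. This is automatically locally finite, $\Aut(G)$-invariant (hence \qt), and \qi\ to~$G$, with no need to locate a grid or choose a piece. The substance is then Proposition~\ref{prop_main}: if $G$ has an end of degree at least~$m$, then $G^2$ contains a $K_m$-minor. One takes $m$ disjoint rays in that end and, for each pair, a connecting path in~$G$; in $G^2$ each such path can be rerouted past any intervening ray by the odd/even-step trick (exactly your king-grid diagonal idea, applied to a segment of a ray rather than a $4$-cycle), so the rays serve as branch sets and the paths supply the adjacencies. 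Since Kr\"on--M\"oller guarantees a thick end whenever $G$ is not \qi\ to a tree, this finishes the converse with no accessibility split, no invariant \td s, and no grid-minor theorem.

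By contrast, your plan has a genuine gap at the step ``bounded $\Aut(G)$-equivariant thickening along this piece''. A thickening along one grid (or one torso) is not $\Aut(G)$-invariant; thickening along its full orbit need not stay bounded or locally finite; and the grid-minor theorem yields minors, not subgraphs, so there is nothing concrete in~$G$ to add diagonals to. The clean resolution is precisely to replace all of this by the global move $G\mapsto G^2$, which is where your king-grid intuition was pointing anyway.
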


Bonamy et al.~\cite{BBEGPS-SurfaceAsDim2} proved that all locally finite Cayley graphs of finitely generated groups of asymptotic dimension at least~$3$ are not minor excluded, moreover from their discussion follows that all \qt\ locally finite graphs of asymptotic dimension at least~$3$ are not minor excluded.
On the other side, \qt\ locally finite graph that are \qi\ to trees have asymptotic dimension~$1$.
However there are finitely generated groups of asymptotic dimension~$1$ that are not virtually free, see Gentimis~\cite{G-AsdimFinPresGroup}.
The following question remains open: do there exist locally finite \qt\ graphs of asymptotic dimension at most~$2$ such that all locally finite \qt\ graphs \qi\ to it are not minor excluded?

\section{Proof}

Before we start the proofs, we need some definitions.
Let $G$ and~$H$ be graphs.
We call $H$ a \emph{minor} of~$G$, if there exists a set $\{G_x\mid x\in V(H)\}$ of disjoint subsets of~$V(G)$ such that if $xy\in E(H)$, then there exists $uv\in E(G)$ with $u\in G_x$ and $v\in G_y$.
The vertex sets $G_x$ are the \emph{branch sets}.

By $G^2$ we denote the graph with vertex set $V(G)$ such that two vertices are adjacent if and only if their distance in~$G$ is either $1$ or~$2$.

A \emph{ray} is a one-way infinite path.
Two rays in~$G$ are \emph{equivalent} if for every finite $S\sub V(G)$ there exists a component of $G-S$ that contains all but finitely many vertices of both rays.
This is an equivalence relation whose classes are the \emph{ends} of~$G$.
Let $m\in\N$.
An end has \emph{degree at least $m$} if it contains $m$ pairwise disjoint rays.
An end is \emph{thin} if there exists an $n\in\N$ such that the end does not have degree at least~$n$.
It is \emph{thick} if it is not thin.

\begin{prop}\label{prop_main}
Let $G$ be an infinite graph with an end of degree at least~$m$.
Then $G^2$ contains a $K_m$-minor.
\end{prop}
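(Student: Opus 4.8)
The plan is to exhibit an explicit $K_m$-minor model in $G^2$ built from $m$ disjoint rays of the given end together with paths joining them. We may assume $G$ is connected, passing to the component of the end, and we fix $m$ pairwise disjoint rays $R_1,\dots,R_m$ belonging to the end of degree at least $m$. Since any two of them lie in the same end, $R_i$ and $R_j$ are joined by infinitely many pairwise disjoint paths, each with its endpoints on $R_i$ and $R_j$ and internally disjoint from $R_i\cup R_j$ (greedily: given finitely many such disjoint paths, their union is finite, so the tails of $R_i$ and $R_j$ beyond it are still equivalent and hence still joined by a further such path, which we truncate at its last vertex on $R_i$ and first subsequent vertex on $R_j$). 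Because these paths are pairwise disjoint, each ray $R_k$ meets only finitely many of them in any bounded initial segment, so we may in addition insist that each newly chosen path meets every $R_k$ only in vertices lying arbitrarily far out along $R_k$. Using this, I choose for each pair $\{i,j\}$ a path $\pi_{ij}$ joining $R_i$ and $R_j$ as above, with all $\binom m2$ paths pairwise disjoint and meeting each ray in sets that are pairwise far apart along that ray; and if $R_i$ and $R_j$ happen to lie at distance at most $2$ in $G$, I discard $\pi_{ij}$, since then the branch sets $B_i,B_j$ below will be $G^2$-adjacent for free.

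The key point, and the reason squaring is needed, is that $\pi_{ij}$ cannot in general be chosen internally disjoint from the remaining rays $R_k$: if it could, contracting each $R_i$ to a point while keeping the $\pi_{ij}$ would already display a $K_m$-minor in $G$ itself, which is false — for instance when $G$ is the square grid, which is planar but has an end of every degree. So $\pi_{ij}$ will generally run through some of the other rays, and these crossings must be dealt with. The mechanism for this is the trivial remark that deleting from a path (or from a ray) every other vertex of some subpath leaves it connected in $G^2$, since the surviving vertices are then at distance at most $2$ in $G$; thus a crossing path may ``tunnel through'' another ray, taking alternate vertices of the shared stretch, without disconnecting either the ray or itself. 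Concretely, I decompose each $\pi_{ij}$ along its crossings with the other rays into maximal ``free'' pieces (internally disjoint from all rays) and maximal ``shared'' pieces (each contained in a single $R_k$), pick a ``switch vertex'' $p_{ij}$ inside a free piece near the middle of $\pi_{ij}$ — if $\pi_{ij}$ is too short for this, its interior already yields a $G$-path of length at most $2$ from $R_i$ to $R_j$ and the pair is handled directly — and then assign each vertex of $G$ an owner: a vertex on $R_k$ and on no $\pi_{ij}$ goes to $B_k$; a vertex in the interior of $\pi_{ij}$ on no ray goes to $B_i$ or $B_j$ according to whether it precedes or follows $p_{ij}$ on $\pi_{ij}$; and a vertex lying on both $\pi_{ij}$ and a further ray $R_k$ — necessarily inside a shared piece — is given alternately, along that piece, to $B_k$ (so the ray survives in $G^2$) and to the side-owner of $\pi_{ij}$ there (so the tunnel survives in $G^2$).

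The verification then runs as follows. The assignment is well-defined because a vertex lies on at most one ray and at most one $\pi_{ij}$, so the $B_i$ are pairwise disjoint. Each $B_i$ is connected in $G^2$: it consists of $R_i$ with alternate vertices deleted at finitely many far-apart spots, together with, for each $j$, the $R_i$-side of $\pi_{ij}$ (attached to $R_i$ at its $R_i$-endpoint) likewise with alternate vertices deleted at finitely many spots, and every deletion is bridged by a distance-$\le 2$ jump. For each surviving pair $\{i,j\}$ the vertex of $\pi_{ij}$ just before $p_{ij}$ lies in $B_i$ and $p_{ij}$ lies in $B_j$, and they are $G$-adjacent; for each discarded pair $B_i$ and $B_j$ meet the distance-$\le 2$ requirement because $R_i$ and $R_j$ do. Hence $\{B_1,\dots,B_m\}$ is a $K_m$-minor model in $G^2$. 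I expect the main obstacle to be precisely this bookkeeping: checking that the alternate-vertex deletions at all crossings can be carried out coherently so that the branch sets stay pairwise disjoint and each stays connected in $G^2$. The leverage for it is the freedom established in the first paragraph — to push the connecting paths arbitrarily far apart along every ray, and to drop pairs of rays that are already close — which ensures the shared pieces never interact and the deletions never pile up.
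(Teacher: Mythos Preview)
Your approach is essentially the paper's: take $m$ disjoint rays in the end, join each pair by a path, and dissolve each crossing with a further ray by the alternate-vertex trick in $G^2$ (give every other vertex of the shared stretch to the ray, the rest to the connecting path, so both stay connected in the square). The paper differs only in organisation: it processes the $\binom{m}{2}$ pairs one at a time, and when the new path $Q$ meets a ray it reroutes $Q$ along that ray from first hit to last hit before doing the even/odd split, choosing the parity at each successive crossing according to whether the last common vertex at the previous crossing went into $Q$ or into the ray.

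One small correction to your closing paragraph: the freedom to push the $\pi_{ij}$ far apart along every ray guarantees that shared pieces coming from \emph{different} connecting paths never interfere, but it does nothing for two consecutive shared pieces along a \emph{single} $\pi_{ij}$. If $\pi_{ij}$ passes directly from $R_k$ to $R_\ell$ with no free vertex in between and you start both shared pieces with the same parity (say, first vertex to the ray-owner), the side-owner's branch set picks up a gap of $G$-distance $3$, which $G^2$ does not bridge. The fix is exactly what the paper spells out: let the starting parity of each shared piece depend on where the last vertex of the preceding piece went. With that adjustment --- and the analogous tweak for placing the switch vertex when $\pi_{ij}$ has no free interior vertex --- your argument goes through, and it is the same argument as the paper's.
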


\begin{proof}
Let $\omega$ be an end of~$G$ of degree at least~$m$ and let $R_1,\ldots, R_m$ be $m$ disjoint rays in~$\omega$.
Let $\cP_1,\ldots,\cP_n$ with $n=\frac{m(m-1)}{2}$ be an enumeration of the two-element subsets of $\{ R_1,\ldots, R_m\}$.

For all $i\in\{0,\ldots,n\}$ we construct a finite vertex set $S_i$ of~$G$, $m$ disjoint rays $R_1^i,\ldots,R_m^i$ in~$R^2$ and a set $\cQ_i$ of~$i$ disjoint paths in~$R^2$ such that the following holds:
\begin{enumerate}[(i)]
\item $R_m^i$ and $R_m$ coincide outside of~$S_i$;
\item the last vertex of~$R_m^i$ in~$S_i$ lies on~$R_m$;
\item the vertices of~$R_m^i$ outside of~$S_i$ form a tail of~$R_m^i$;
\item every $Q\in\cQ_i$ is internally disjoint from all $R_j^i$;
\item every $Q\in \cQ_i$ has all its vertices in~$S_i$;
\item for every $1\leq j\leq i$, there is a path $Q_j\in \cQ_i$ joining the rays in $\cP_j$.
\end{enumerate}
Note that (i) implies that $R_m^i$ has a tail in~$G$.

For $i=0$, set $S_i=\es$ and $\cQ_i=\es$ and $R_j^i=R_j$ for all $1\leq j\leq m$.
This satisfies (i)--(vi) trivially.

Now let us assume that we have constructed $S_{i-1}$, $R_1^{i-1},\ldots, R_m^{i-1}$ and $\cQ_{i-1}$.
Let $\cP_i=\{R_k,R_\ell\}$ and let $Q$ be a path in~$G-S_{i-1}$ joining $R_k$ and $R_\ell$.
Note that $Q$ also joins $R_k^{i-1}$ and $R_\ell^{i-1}$.
We may assume that $Q$ meets those two rays only in its end vertices.
Let $R_{i_1}^{i-1}$ be the ray that $Q$ meets first after its starting vertex.
Let $x_1$ be the first common vertex of $Q$ and $R_{i_1}^{i-1}$ and let $x_2$ be the last such vertex.
We replace $x_1Qx_2$ by a path in $G^2$ consisting of all vertices on $x_1R_{i_1}^{i-1}x_2$ with even distance on that ray to~$x_1$ and we replace $x_1R_{i_1}^{i-1}x_2$ by the path in~$G^2$ of all vertices with odd distance to~$x_1$ on that ray.
The resulting path does not meet the new ray $R_{i_1}^i$ at all.
We continue doing these modifications for all remaining intersections of the new path with other rays $R_{i_j}^{i-1}$, where we put the vertices with even distance to the first vertex of~$Q$ and $R_{i_j}^{i-1}$ into the modification of~$Q$ if the last common vertex of~$R_{i_{j-1}}^{i-1}$ and~$Q$ was not put into that path and otherwise the vertices with odd distance.
If we have not modified the ray $R_j^{i-1}$, then we set $R_j^i:=R_j^{i-1}$.
By the choice of when to put the first common vertex of~$Q$ and the rays into the modification of~$Q$ or the new rays, the resulting path~$Q'$ and the rays $R_{i_j}^i$ are indeed a path and rays.
Then $Q'$ will be added to the set $\cQ_{i-1}$ to obtain the set~$\cQ_i$.
Let $S_i$ be $S_{i-1}$ together with all vertices of~$Q'$ and, for each $1\leq j\leq m$, a finite starting path of~$R_j^i$ such that $R_j^i$ coincides with $R_j$ after this path and such that the last vertex of that path also lies on~$R_j$.
By construction, (i)--(vi) hold.

For $1\leq i\leq m$, let $P_i$ be a subpath of~$R_i^n$ that contains all vertices of~$R_i^n$ that lie on paths of~$\cQ_n$.
Then it is easy to see that $G^2$ contains a $K_m$-minor, where each $P_i$ lies in a different branch set and each $Q\in\cQ_n$ that connects $P_i$ and $P_j$ is split among those branch sets.
\end{proof}

Using a result of Thomassen~\cite{T-Hadwiger}, we obtain the following corollary of Proposition~\ref{prop_main}.

\begin{cor}
Let $G$ be a one-ended \qt\ locally finite graph.
Then $G^2$ is not minor excluded.
\end{cor}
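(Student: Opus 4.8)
The plan is to deduce the corollary from Proposition~\ref{prop_main} together with a single structural fact about one-ended \qt\ graphs. First I would unwind the conclusion: a graph is not minor excluded exactly when every finite graph is one of its minors, and since any finite graph $H$ is a subgraph --- hence a minor --- of $K_{|V(H)|}$, it suffices to prove that $G^2$ contains a $K_m$-minor for every $m\in\N$. By Proposition~\ref{prop_main} this reduces further to the claim that, for every $m\in\N$, the graph~$G$ has an end of degree at least~$m$; and since $G$ is one-ended, this is precisely the assertion that its unique end~$\omega$ is thick.

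The hypotheses on~$G$ are used only to establish that $\omega$ is thick, and here I would invoke Thomassen~\cite{T-Hadwiger}: the input needed is that a locally finite \qt\ graph which is not \qi\ to a tree has a thick end, together with the observation that a one-ended locally finite \qt\ graph is never \qi\ to a tree (the \qt\ locally finite graphs \qi\ to trees are finite, two-ended, or infinitely-ended, never one-ended). It follows that $\omega$ is thick, so for every~$m$ it contains $m$ pairwise disjoint rays; thus $G$ has an end of degree at least~$m$ for every~$m$.

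Putting the pieces together: given $m\in\N$, the end $\omega$ has degree at least~$m$, so Proposition~\ref{prop_main} produces a $K_m$-minor in~$G^2$; as $m$ was arbitrary, $G^2$ has every finite graph as a minor and is therefore not minor excluded. I expect the main obstacle to be the middle step --- extracting from \cite{T-Hadwiger} (and the surrounding theory of ends of \qt\ graphs) the clean statement that a one-ended locally finite \qt\ graph has a thick end; once that is in hand, the reduction of ``not minor excluded'' to ``$K_m$-minor for all $m$'' and the invocation of Proposition~\ref{prop_main} are entirely routine.
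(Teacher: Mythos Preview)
Your overall architecture matches the paper's proof exactly: reduce ``not minor excluded'' to ``$K_m$-minor for every~$m$'', invoke Proposition~\ref{prop_main} to reduce that to ``the unique end is thick'', and then appeal to the literature for the thickness.

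Where you diverge is in the middle step. The paper cites Thomassen~\cite[Proposition~5.6]{T-Hadwiger} for the \emph{direct} statement that the unique end of a one-ended locally finite \qt\ graph is thick; no detour through \qis\ to trees is needed. What you describe as coming from Thomassen --- ``not \qi\ to a tree implies a thick end'' --- is in fact the content of Kr\"on--M\"oller~\cite[Theorem~2.8]{KM-QuasiIsometries}, not of~\cite{T-Hadwiger}. More importantly, your auxiliary observation that a one-ended locally finite \qt\ graph is never \qi\ to a tree is true but not free: such a graph would be \qi\ to a ray, and ruling that out for \qt\ graphs requires a growth or structure argument (e.g.\ linear growth forces two ends) that you have not supplied. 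The most natural way to justify it is essentially to show the end is thick --- which is circular. So your route works in principle, but it trades one clean citation for a misattributed citation plus an unjustified lemma; the paper's direct appeal to Thomassen's Proposition~5.6 is both shorter and avoids the gap.
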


\begin{proof}
By Thomassen \cite[Proposition 5.6]{T-Hadwiger}, the unique end of a one-ended \qt\ locally finite graph is thick.
Thus, Proposition~\ref{prop_main} implies the assertion.
\end{proof}

\begin{prop}\label{prop_QITreeIsMinEx}
Let $G$ be a graph of bounded degree that is \qi\ to a tree of bounded degree.
Then $G$ is minor excluded.
\end{prop}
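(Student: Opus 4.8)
The plan is to pull back the tree structure of $T$ along the \qiy\ to obtain a \td{} of~$G$ with bounded-diameter bags; the bounded degree of~$G$ then makes the bags finite of bounded size, so $G$ has bounded \tw, and a graph of bounded \tw\ cannot have a large complete graph as a minor.

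In detail, I would fix a \qiy\ $\varphi\colon V(G)\to V(T)$ with constants $\gamma\ge 1$ and $c\ge 0$, set $r:=\gamma(1+c)$, and for each $t\in V(T)$ define a bag $V_t:=\{x\in V(G):d_T(\varphi(x),t)\le r\}$. The first step is to verify that $(T,(V_t)_{t\in V(T)})$ is a \td{} of~$G$. Every vertex $x$ of~$G$ lies in $V_{\varphi(x)}$. If $xy\in E(G)$, then $\tfrac1\gamma d_T(\varphi(x),\varphi(y))-c\le d_G(x,y)=1$ gives $d_T(\varphi(x),\varphi(y))\le\gamma(1+c)=r$, so $x,y\in V_{\varphi(x)}$. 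And for a fixed vertex~$x$, the set $\{t\in V(T):x\in V_t\}$ is precisely the ball of radius~$r$ around $\varphi(x)$ in~$T$, which is connected because balls in trees are connected. So all three \td{} axioms hold; note that this step uses nothing about vertex degrees, so in particular the bounded degree of~$T$ is not needed.

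The second step uses the bounded degree of~$G$ to bound the bag sizes. If $x,y\in V_t$, then $d_T(\varphi(x),\varphi(y))\le 2r$, hence $d_G(x,y)\le\gamma\cdot 2r+c=:D$. Thus each nonempty bag $V_t$ is contained in a ball of~$G$ of radius~$D$, which has at most $N:=1+\Delta+\dots+\Delta^{D}$ vertices, where $\Delta$ denotes the maximum degree of~$G$ (the case $\Delta\le 1$ being trivial, as then $G$ is a forest). Consequently $G$ has a \td{} all of whose bags have size at most~$N$, i.e.\ $G$ has \tw\ at most $N-1$. Since \tw\ does not increase under taking minors and $K_{N+1}$ has \tw\ equal to~$N$, the complete graph $K_{N+1}$ is not a minor of~$G$; hence $G$ is minor excluded.

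I do not foresee a real obstacle: the only points requiring care are tracking the \qiy\ constants through the two distance estimates and checking the ``connected subtree'' axiom of a \td. One could instead appeal to a known characterization of graphs \qi\ to trees in terms of bounded tree-length (in the spirit of Dourisboure--Gavoille), but the direct construction above is self-contained, avoids locating the exact reference, and makes visible exactly where each hypothesis enters.
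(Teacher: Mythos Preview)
Your proof is correct and takes a genuinely different route from the paper's. The paper argues directly: given a $K_n$-minor with branch sets $G_x$, it looks at how the images $\varphi(G_x)$ sit with respect to a single edge $st$ of~$T$, counts how many branch sets can lie entirely on one side and how many can straddle the cut, and from this extracts an explicit bound $n\le\max\{2M_T^2M_G^2,\,M_G(D_TM_T+1)\}$ via a case analysis (including an orientation argument when no edge of~$T$ separates two branch-set images). Your approach instead packages the tree structure into a \td\ of~$G$ with bags $V_t=\varphi^{-1}(B_T(t,r))$, checks the three axioms, bounds the bag diameters in~$G$ via the \qiy, and then invokes minor-monotonicity of \tw\ to exclude a large clique. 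Your argument is shorter and more conceptual; it also shows, as you note, that the bounded-degree hypothesis on~$T$ is not actually needed, whereas the paper's bound uses $D_T$ explicitly through~$M_T$. The paper's approach, on the other hand, is entirely self-contained and yields an explicit numerical bound without appealing to the (standard but external) facts that \tw\ is minor-monotone and that $K_{N+1}$ has \tw~$N$.
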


\begin{proof}
Let $T$ be a tree of bounded degree and let $\varphi\colon V(G)\to V(T)$ be a $(\gamma,c)$-\qiy\ for some $\gamma\geq 1$ and $c\geq 0$.
Let $D_G$, $D_T$ be the maximum degrees of~$G$ and~$T$, respectively.
Then
\[
M_T:=\sum_{i=0}^{\gamma+c-2}(D_T-1)^i,\qquad M_G:=\sum_{i=0}^{c-1}(D_G-1)^i
\]
are the maximum sizes of balls around vertices of~$T$, of~$G$ of radius $\gamma+c-1$, of radius $c$, respectively.
Let $H$ be a minor of~$G$ that is isomorphic to~$K_n$ for some $n\in\N$.
We will show that $n\leq\max\{2M_T^2 M_G^2,M_G(D_TM_T+1)\}$, which implies that $G$ is minor excluded.
For $x\in V(H)$, we denote by $G_x$ the branch set of~$x$ in~$G$.
For an edge $e=st\in E(T)$, let $T_s$, $T_t$ be the component of $T-e$ that contains~$s$, $t$, respectively.
Note that since $\varphi$ is a $(\gamma,c)$-\qiy, we have $|\varphi\inv(t)|\leq M_G$.

Let us assume that there is an edge $e=st\in E(T)$ and $x,y\in V(H)$ with $\varphi(G_x)\sub V(T_s)$ and $\varphi(G_y)\sub V(T_t)$.
Since $H$ is a complete graph, there is a $G_x$-$G_y$ edge $uv$ in~$G$.
By the assumption on~$\varphi$, we have $d(\varphi(u),\varphi(v))\leq \gamma+c$.
Let us assume that $\varphi(u)\in V(T_s)$.
Then $\varphi(u)$ lies in the ball of radius at most $\gamma+c-1$ around~$s$ and $\varphi(v)$ lies in the ball of radius at most $\gamma+c-1$ around~$t$.
There are at most $M_T^2$ many such pairs $(u,v)$.
Since $|\varphi\inv(a)|\leq M_G$ for every $a\in V(T)$, there are at most $M_T^2 M_G^2$ many such pairs $(u,v)$.
Now let $z\in V(H)$ such that $\varphi(G_z)$ meets $T_s$ and~$T_t$.
Then there are adjacent $u,v\in G_z$ with $\varphi(u)\in V(T_s)$ and $\varphi(v)\in V(T_t)$.
So as above, we find at most $M_T^2 M_G^2$ such edges $uv$ and hence such vertices~$z$.
This implies $n\leq 2M_T^2 M_G^2$.

Now let us assume that for every edge $e=st\in E(T)$ there is at most one component of $T-e$ that contains some $\varphi(G_x)$.
If $T_s$, $T_t$ contains some $\varphi(G_x)$, then we orient the edge $e$ towards $s$, $t$, respectively.
If neither $T_s$ nor $T_t$ contains any $\varphi(G_x)$, we do not orient $e$ at all.
Note that this orientation is \emph{consistent}: if $s_1t_1, s_2t_2\in E(T)$ and they are oriented towards $s_1$, $s_2$, respectively, then we must have either $T_{s_2}\sub T_{s_1}$ or $T_{s_1}\sub T_{s_2}$.
Thus and since $H$ is finite, there is a unique non-empty subtree $T'$ of~$T$ whose inner edges are not directed at all but such that every edge outside of~$T'$ is directed towards~$T'$.
If $T'$ has an edge $e=st$, then for every $x\in V(H)$, the set $\varphi(G_x)$ meets $T_s$ and~$T_t$.
By the same argument as in the previous case, there are at most $M_T^2M_G^2$ such branch sets.
Thus, we have $n\leq M_T^2M_G^2$ in this case.
So let us assume that $T'$ consists of a unique vertex~$t$.
Then every branch set $G_x$ contains a vertex $u$ with $d(\varphi(u),t)\leq \gamma+c$.
There are at most $D_TM_T+1$ many possibilities for~$\varphi(u)$ and thus at most $M_G(D_TM_T+1)$ many possibilities for~$u$.
This implies $n\leq M_G(D_TM_T+1)$ in this situation.

So we have $n\leq \max\{2M_T^2 M_G^2,M_G(D_TM_T+1)\}$, which proves the assertion as discussed above.
\end{proof}

Now we are able to prove our main result.

\begin{proof}[Proof of Theorem~\ref{thm_Main}.]
Assume that $G$ is not \qi\ to any tree.
Since locally finite \qt\ graphs without thick end are \qi\ to trees by Kr\"on and M\"oller \cite[Theorem 2.8]{KM-QuasiIsometries}, $G$ has a thick end.
Thus, $G^2$ is not minor excluded by Proposition~\ref{prop_main}.
Since $G^2$ is \qi\ to~$G$, it is not minor excluded, either.

Let us now assume that $G$ is \qi\ to a tree.
Note that $G$ has bounded degree by assumption.
According to Kr\"on and M\"oller~\cite[Theorem 2.8]{KM-QuasiIsometries}, every end of~$G$ is thin.
Thus, \cite[Theorem 7.5]{HLMR} and \cite[Lemma 2.9]{H-QuasiIsometriesAndTA} imply that $G$ is \qi\ to a 3-regular tree.
By Proposition~\ref{prop_QITreeIsMinEx}, $G$ is minor excluded.
Since every graph that is \qi\ to~$G$ is also \qi\ to a tree, they are minor excluded as well.
\end{proof}

\end{document}